\newcommand{\Q}{{\QQ}}
\newcommand{\Qtr}{{\mathbb Q}^{\rm tr}}
\newcommand{\Qab}{{\mathbb Q}^{\rm ab}}
\newtheorem{question}{Question}
\newtheorem*{remark*}{Remark}
\newtheorem*{problem*}{Question}
\newtheorem{problem}{Problem}
\newtheorem{algo}{Algorithm}
\def\ds{\displaystyle}
\renewcommand\atop[2]{{\substack{#1\\ #2}}}
\numberwithin{equation}{section}
\newtheorem{theorem}{Theorem} [section]
\newtheorem{definition}[theorem]{Definition}
\newtheorem{lemma}[theorem]{Lemma}
\newtheorem{proposition}[theorem]{Proposition}
\newtheorem{example}[theorem]{Example}
\newtheorem{examples}[theorem]{Examples}
\theoremstyle{remark}
\newtheorem{remark}[theorem]{Remark}
\def\CC{\mathbb{C}}
\def\C{\mathbb{C}}
\def\NN{\mathbb{N}}
\def\QQ{\mathbb{Q}}
\def\RR{\mathbb{R}}
\def\ZZ{\mathbb{Z}}
\def\vv{\mathbf{v}}
\def\ww{\mathbf{w}}
\def\yy{\mathbf{y}}
\def\s{\sigma}
\def\Qalg{\overline{\QQ}}
\def\z{\zeta}
\def\calH{\mathcal{H}}
\def\calL{\mathcal{L}}
\def\calM{\mathcal{M}}
\def\calo{\mathcal{O}}
\def\calR{\mathcal{R}}
\def\calY{\mathcal{Y}}
\def\OK{\calo_K}
\def\UK{\calo_K^\times}
\def\Gal{\mathrm{Gal}}
\def\Z{\mathbb{Z}}
\def\sp{\mathrm{span}}
\def\frA{\mathfrak{A}}
\DeclareMathOperator{\PB}{(B)} 
\DeclareMathOperator{\PS}{(S)}
\title{Large algebraic integers}
\author{Denis Simon}
\address{Normandie Univ, UNICAEN, CNRS, Laboratoire de Math\'ematiques Nicolas Oresme, 14000 Caen, France}
\email{denis.simon@unicaen.fr}
\author{Lea Terracini}
\address{Dipartimento di Informatica, Università di Torino}
\email{lea.terracini@unito.it}
\thanks{Lea Terracini is member of the Italian INdAM group GNSAGA.}
\keywords{number fields, largeness, unit lattices, covering radius, Weil height, algorithm, regulator, floor functions}
\subjclass{11H31, 11R27, 11R33, 11Y40, 11G50}
\begin{document}

\maketitle
\begin{abstract}
An algebraic integer is said \emph{large} if all
its real or complex embeddings have absolute value larger than $1$. An integral ideal is said \emph{large} if it admits a large generator. 
We investigate the notion of largeness, relating it  to some arithmetic invariants of the field involved, such as the regulator and the covering radius of the lattice of units. We also study its connection with the Weil height and the Bogomolov property. We provide an algorithm for testing largeness and give some applications to the construction of floor functions arising in the theory of continued fractions.
\end{abstract}
\section{Introduction}
Let $K$ be a number field, $\calo_K$ be its ring of integers and $\frA\subseteq\calo_K$ be a principal ideal.
Our aim is to investigate the following property of $\frA$:

\begin{equation}\label{eq:property}\hbox{ $\frA$ admits a generator $x$ such that $|\sigma(x)|\geq 1$ for every embedding $\s: K\to\CC$}.
\end{equation}

We shall call such a generator a \emph{large} element of $\calo_K$ (\emph{strictly large} when the strict inequality holds), and we shall call \emph{large} every ideal satisfying property \eqref{eq:property}.

We shall see that all but finitely many principal ideals $\frA$ are strictly large; in particular this happens  when the \emph{logarithmic norm} $n(\frA)=\frac {N(\frA)}{[K:\QQ]}$ exceeds the \emph{covering radius} of the lattice of units with respect to the $L_\infty$ norm. Moreover, every non-trivial ideal becomes large in a suitable finite extension of $K$.

It is possible to relate the notion of largeness of a principal ideal to the Weil height of its generators. Therefore, lower bounds of the Weil height on $K$, as given by the Bogomolov property, may help to prove that some ideals are not large. 
To this aim, we shall state some inequalities concerning 
the covering radius, the regulator  and  the Weil height of systems of multiplicatively independent units of $K$. We shall apply the technique described above in some concrete example.

As soon as the group of units of $K$ is known, it is relatively easy to decide if a principal ideal is large, and we give the corresponding algorithm. Then, we shall present
the results of applying it to some particular ideal in cyclotomic fields.

As a last application, we shall define the notion of floor function for $K$ relatively to $\frA$ and show that condition \eqref{eq:property} allows to explicitly construct a bounded floor function.
This turns out to be a good property for the the resulting continued fractions (\cite{CapuanoMurruTerracini2022}).

\section{The largeness property and general results}\label{sect:theproblem}
Let $K$ be a number field of degree $d$.
We denote by $\OK$ the ring of integers of $K$ and $\UK$ the group of units.
The number field $K$ has $r_1$ real embeddings $\s_1,\dots,\s_{r_1}$
and $2r_2$ complex embeddings $\s_{r_1+1},\dots,\s_{r_1+2r_2}$,
where $r_1+2r_2 = d$ and $\s_{r_1+i}$ and $\s_{r_1+r_2+i}$
are conjugates for $1\leq i \leq r_2$. We denote by $\Sigma$ the whole set of Archimedean embeddings. We shall denote by $|\cdot |$ the standard complex absolute value.
An element $x \in K$ therefore has
$r_1+r_2$ Archimedean absolute values, namely $|\s_1(x)|, \dots, |\s_{r_1+r_2}(x)|$,
and we have $|\s_{r_1+i}(x)| = |\s_{r_1+r_2+i}(x)|$
for all $1\leq i \leq r_2$.
We put $s=r_1+r_2$, $r=s-1$.

Let
\begin{align*} \iota: K& \longrightarrow  \RR^{r_1}\times \CC^{r_2}\\
\lambda &\longmapsto (\sigma_1(\lambda),\ldots,\sigma_{r_{1}}(\lambda),\sigma_{r_1+1}(\lambda),\ldots,\sigma_{r_1+r_2}(\lambda))
\end{align*} be the canonical embedding of $K$, and
$$\ell: K^\times \to \RR^{r_1+r_2}$$ be the logarithmic embedding, i.e., the composition $\calL\circ\iota$ where 
\begin{align*} \calL: (\RR^\times)^{r_1}\times (\CC^\times)^{r_2}& \longrightarrow  \RR^{r_1}\times \RR^{r_2}\\
(x_1,\ldots, x_{r_1}, y_1,\ldots, y_{r_2})  &\longmapsto (\log|x_1|,\ldots, \log|x_{r_1}|, 2\log|y_1|,\ldots, 2\log|y_{r_2}|).
\end{align*} \\
For $\mathbf{x}=(x_1,\ldots, x_{r_1},y_1,\ldots, y_{r_2})\in \RR^{r_1}\times \CC^{r_2}$, let us define
$$N(\mathbf{x})=\prod_{i=1}^{r_1}|x_i 
|\cdot \prod_{j=1}^{r_2}|y_j|^2;$$ 
then, $N(\iota(a))=|N_{K/\QQ}(a)|$ for every $a\in K.$
The {\em absolute norm} of $x\in K$ is equal to the absolute value of the norm of $x$.\\
We shall denote $\Lambda_K=\ell(\calo_K^\times)$; it is a lattice of rank $r$ in $\RR^s$ by Dirichlet's Unit Theorem. We recall also that  the \emph{regulator} $R_K$ of $K$ is the determinant of any submatrix of order $r$ of the $r\times (r+1)$ matrix whose rows are $\ell(u_1),\ldots, \ell(u_r)$ 
for a system $u_1,\ldots, u_r$ of fundamental units for $K$. If $V_K$ is  the volume  of a fundamental domain for $\Lambda_K$ then the relation $V_K=\sqrt{s}R_K$ holds.

\begin{definition}\ \begin{itemize}
    \item[a)] We say that $x\in \calo_K$ is \emph{large} (resp. \emph{strictly large}) if
$|\sigma(x)|\geq  1$ (resp. $|\sigma(x)|> 1$) for every $\sigma\in\Sigma$ . 
\item[b)] An ideal $\frA\subseteq\calo_K$ is \emph{large} (resp. \emph{strictly large}) if it principal and has a large (resp. strictly large) generator.
\end{itemize}  
\end{definition}

For $x \in \calo_K$
as in the above definition,
we observe that
$x$ is large in $\calo_K$ if and only if $x$ is large in $\calo_L$ for any finite extension $L$ of $K$.
For the ideal $\frA$
it is true that if it is large in $K$, then
the ideal $\frA \calo_L$ is large in $\calo_L$, but the converse is not true.

Then the following definition makes sense and extends the notion of largeness to possibly infinite extensions:
\begin{definition}
Let $L$ be an infinite extension of $K$.
For an element $x \in \calo_K$
and an ideal $\frA \subseteq \calo_K$, we say that:
\begin{itemize}
\item[a)]
$x$ is \emph{large} (resp. \emph{strictly large}) in $L$ if there is a number field $K'$ with $K \subseteq K'\subseteq L$ such that $x$ is large (resp. strictly large) in $K'$.
\item[b)]
$\frA$ is \emph{large} (resp. \emph{strictly large}) in $L$ if there is a number field $K'$ with $K \subseteq K'\subseteq L$ such that $\frA \calo_{K'}$ is large (resp. strictly large) in $K'$.
\end{itemize}
\end{definition}

Since every unit in $\calo_K$ has norm $\pm 1$, a unit $x$ is large if and only if $|\sigma(x)|=1$ for every $\sigma\in \Sigma$; by a theorem of Kronecker \cite{Kronecker1857}, this happens if and only if $x$ is a root of unity. Therefore no unit can be strictly large. So every strictly large element $x$ in $\calo_K$ must satisfy $|N_{K/\QQ}(x)|\geq 2$.\\
We shall see in Proposition \ref{prop:normalarge} that almost all principal ideals in $\calo_K$ are (strictly) large. In order to prove this fact we need to recall some terminology from lattice theory. 

Let $\Lambda$ be a lattice in $\RR^n$ of rank $r$ and for a real number $p\in [1,\infty)\cup\{\infty\}$ let $|| \cdot||_p$ be the norm $L_p$ in $\RR^n$. The \emph{distance function} relatively to $p$ is by definition
    $$\rho_p(\vv,\Lambda)=\min_{\ww\in\Lambda} ||\vv-\ww||_p.$$
    The \emph{covering radius} of $\Lambda$ with respect to $|| \cdot||_p$ is
    $$\rho_p(\Lambda)=\sup_{\vv\in \sp(\Lambda)} \rho_p(\vv,\Lambda).$$
     Balls of radius $\rho_p(\Lambda)$ centered around all lattice points cover the whole space $\sp(\Lambda)$.\\
    By the well known inequality
     \begin{equation}\label{eq:disuglp} ||\vv||_p\leq ||\vv||_r\leq n^{\frac 1 r-\frac 1 p}||\vv||_p\quad \hbox{for } \infty\geq p\geq r,\end{equation}
     we get 
    \begin{equation}\label{eq:disugcovrad} \rho_p(\Lambda)\leq \rho_r(\Lambda) \leq n^{\frac 1 r-\frac 1 p}\rho_p(\Lambda)\quad \hbox{ for } \infty\geq p\geq r.\end{equation} 
    If $K$ is a number field we shall write $\rho_p(K)$ instead of $\rho_p(\Lambda_K)$.
    
    For every algebraic number $x\in\overline{\QQ}^\times$ we define the \emph{logarithmic norm}
    $$n(x)=\frac{\log|N_{\QQ(x)/\QQ}(x)|}{[\QQ(x):\QQ]}.$$
    Analogously, if $\frA\subseteq \calo_K$ is any non-zero ideal, we write
    $$n(\frA)=\frac{\log|N_{K/\QQ}(\frA)|}{[K:\QQ]}.$$
    Notice that $$n(x)=\frac{\log|N_{K/\QQ}(x)|}{[K:\QQ]},$$
    for every finite extension $K$ of $\QQ(x)$; moreover
    $$n(x)=n(ux),$$
    for every  algebraic unit $u\in\overline{\QQ}$. 
    Then $n(a)=n(a\calo_K)$
     depends only on the principal ideal generated by $a$ in the ring of integers of every number field containing $a$.\\
     We also observe that $n:\Qalg^\times\to \RR$ is a morphism; in particular $n(x^k)=kn(x)$  for every $k\in\NN$.
     We also have $n(x) \ge 0$ when $x$ is an algebraic integer.
\begin{proposition}\label{prop:normalarge}  
 Every principal ideal $\frA$ of $\calo_K$ such that $n(\frA)>\rho_\infty(K)$ is strictly large.\\
 Therefore all but finitely many integral principal ideals of $\calo_K$ are strictly large.
\end{proposition}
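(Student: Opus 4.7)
The plan is to pass to the logarithmic embedding and translate strict largeness into a covering problem for the unit lattice. Writing $\frA = a\calo_K$, every generator has the form $ua$ with $u\in\UK$, and $ua$ is strictly large exactly when each coordinate of $\ell(ua)=\ell(u)+\ell(a)$ is strictly positive (the factor $2$ on complex entries of $\ell$ is immaterial for signs). So the task is to find $\ell(u)\in\Lambda_K$ sufficiently close to $-\ell(a)$ in the sup-norm.

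First, I would decompose $-\ell(a) = v + t\mathbf{1}$ orthogonally along $\RR^s = H_0 \oplus \RR\mathbf{1}$, where $H_0 = \sp(\Lambda_K)$ is the hyperplane of zero-sum vectors and $\mathbf{1}=(1,\ldots,1)$. Summing coordinates gives $st = -\log|N_{K/\QQ}(a)| = -d\,n(\frA)$, whence $t = -d\,n(\frA)/s$, a negative quantity under the hypothesis. Next, the definition of the covering radius, applied to $v\in H_0$, yields $\lambda\in\Lambda_K$ with $\|v-\lambda\|_\infty \le \rho_\infty(K)$; choosing $u\in\UK$ with $\ell(u)=\lambda$, each coordinate satisfies
$$\ell(ua)_i \;=\; (\lambda_i - v_i) - t \;\ge\; -\rho_\infty(K) - t \;=\; \tfrac{d}{s}\,n(\frA) - \rho_\infty(K) \;\ge\; n(\frA) - \rho_\infty(K) \;>\; 0,$$
using $d\ge s$ and the hypothesis $n(\frA)>\rho_\infty(K)$. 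Hence $ua$ is strictly large.

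For the last assertion, the set of integral ideals of $\calo_K$ of norm below a fixed bound is finite (the standard Minkowski-type finiteness), so only finitely many principal ideals can violate $n(\frA) > \rho_\infty(K)$. The only point requiring care is the bookkeeping around the factor $2$ on complex entries of $\ell$ and the orthogonal projection onto $\mathbf{1}^\perp$; once those normalizations are set up correctly the argument reduces to a direct application of the covering-radius definition, and I do not anticipate a substantive obstacle.
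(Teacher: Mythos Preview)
Your proof is correct and follows essentially the same route as the paper: translate strict largeness into positivity of all coordinates of $\ell(ua)$, project $\ell(a)$ into the zero-sum hyperplane $\calH=\sp(\Lambda_K)$, and invoke the definition of $\rho_\infty(K)$ to find a suitable lattice point. The only cosmetic difference is the direction of projection: the paper subtracts $\tfrac{1}{d}\log N\,(1,\dots,1,2,\dots,2)$ so that each coordinate of the residual is exactly $\log|\sigma(ux)|-n(x)$ (or twice that, for complex places), giving $\log|\sigma(ux)|\ge n(x)-\rho_\infty(K)>0$ directly; your orthogonal projection along $\mathbf{1}$ instead yields $\ell(ua)_i\ge \tfrac{d}{s}n(\frA)-\rho_\infty(K)$ and requires the harmless extra observation $d\ge s$. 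Either way the argument is a straightforward application of the covering-radius definition, and your treatment of the finiteness clause matches the paper's.
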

\begin{proof}
Let $x\in \calo_K$ be a generator of $\frA$ and put $N=|N_{K/\QQ}(x)|$. The image of units $\ell(\calo_K^\times)$ is a lattice in  $\RR^s$ of rank $r=s-1$; it spans the hyperplane $\calH$ of $\RR^s$ with equation $\sum_{i=1}^{r_1} x_i+\sum_{i=1}^{r_2}y_i=0$. The vector $$\yy=\ell(x)-\frac 1 d \log(N)(1,\ldots,1,2,\ldots,2)$$ lies on $\calH$ . Let $\rho=\rho_\infty(K)$ and assume $n(x)>\rho$; by definition of covering radius, there exists $u\in\calo_K^\times$ such that $||\yy+\ell(u)||_\infty \leq \rho$. This means that $|\log|\sigma(ux)|-n(x)|\leq \rho$ for every Archimedean embedding $\sigma$ of $K$, so that
$$|\log|\sigma(ux)||\geq n(x)-\rho> 0.$$
The second assertion follows from the fact that the ideals of norm $\le\rho$ are finitely many.
\end{proof}

\begin{proposition}\label{prop:esisteL} Every non-trivial integral ideal $\frA \subsetneq \calo_K$ is strictly large in $\overline{\QQ}$.
\end{proposition}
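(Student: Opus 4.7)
The strategy is to reduce the claim to Proposition~\ref{prop:normalarge} applied inside a sufficiently large finite extension $K'/K$. Two preliminary points: first, the logarithmic norm is invariant under passage to finite extensions, since $N_{K'/K}(\frA\calo_{K'})=\frA^{[K':K]}$ gives
\[
n(\frA\calo_{K'}) \;=\; \frac{\log|N_{K/\QQ}(\frA)|^{[K':K]}}{[K':K]\,[K:\QQ]} \;=\; n(\frA).
\]
Second, $\frA\subsetneq\calo_K$ forces $|N(\frA)|\ge 2$, so $n(\frA)>0$ is a fixed positive constant. By Proposition~\ref{prop:normalarge} applied to the principal ideal $\frA\calo_{K'}$ in $K'$, it therefore suffices to produce a finite extension $K'/K$ satisfying
\[
\rho_\infty(\Lambda_{K'}) \;<\; n(\frA).
\]

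To build such a $K'$, the plan is to exhibit a tower of extensions along which the $L_\infty$-covering radius of the unit lattice tends to zero. Pick a non-torsion unit $\eta$ of $K$ (or, if $\UK$ is torsion, one taken from a small auxiliary extension such as $K(\sqrt 2)$). Working in the field $K_m=K_0(\eta_1^{1/m},\ldots,\eta_r^{1/m})$, where $\eta_1,\ldots,\eta_r$ is a system of multiplicatively independent units of some fixed starting field $K_0\supseteq K$, each radical $\eta_i^{1/m}$ is itself a unit of $\calo_{K_m}^\times$ whose logarithmic embedding has all coordinates of order $1/m$. These short vectors generate a sublattice of $\Lambda_{K_m}$ whose generators shrink to zero as $m\to\infty$.

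The main technical obstacle is that enlarging $K$ simultaneously enlarges the ambient hyperplane $\calH_{K_m}$: the rank $r(K_m)$ grows because new Archimedean places are created. One must therefore verify that the radicals (possibly supplemented by further units, such as cyclotomic units arising from adjoined roots of unity $\zeta_m$) generate a full-rank sublattice of $\Lambda_{K_m}$, whose diameter in $L_\infty$ still tends to zero. This balance between growth of dimension and growth of density is where the careful bookkeeping is needed.

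Once such a full-rank sublattice with diameter $O(1/m)$ is in hand, we obtain $\rho_\infty(K_m)<n(\frA)$ for all sufficiently large $m$. Proposition~\ref{prop:normalarge} then yields that $\frA\calo_{K_m}$ is strictly large in $K_m$, and by the definition of largeness in an infinite extension this shows that $\frA$ is strictly large in $\overline{\QQ}$.
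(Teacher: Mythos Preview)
Your strategy cannot work: you aim to find extensions $K_m$ with $\rho_\infty(K_m)\to 0$, but Theorem~\ref{teo:regolatore}(b) of the paper (which follows from Zimmert's lower bound on the regulator) asserts the existence of a \emph{universal} constant $c>0$ such that $\rho_\infty(K')\ge c$ for every number field $K'$ with $r(K')\ge 1$. Thus if $n(\frA)<c$, no choice of $K'$ will ever satisfy $\rho_\infty(K')<n(\frA)$. Concretely, the radicals $\eta_i^{1/m}$ are indeed short, but they span only an $r(K_0)$-dimensional subspace of the hyperplane $\calH_{K_m}$, whose dimension $r(K_m)$ grows with $m$; the remaining directions cannot be filled by short units, since every non-torsion unit of $K_m$ has height bounded below (e.g.\ by Amoroso--Dvornicich if one stays inside abelian extensions, and more generally because of the regulator bound). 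The ``careful bookkeeping'' you allude to is therefore not a technicality but an impossibility. A secondary gap is that you apply Proposition~\ref{prop:normalarge} to $\frA\calo_{K'}$ without first arranging for this ideal to be principal.

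The paper's argument sidesteps the covering-radius obstruction by moving in the opposite direction: rather than shrinking $\rho_\infty$, it enlarges the logarithmic norm. After passing to an extension $K'$ where $\frA\calo_{K'}=x\calo_{K'}$ is principal, one chooses $j$ with $n(x^j)=j\,n(x)>\rho_\infty(K')$, so that $ux^j$ is strictly large for some unit $u\in\calo_{K'}^\times$. One then adjoins a $j$-th root $\omega$ of $u$; the element $\omega x$ generates $\frA\calo_{K'(\omega)}$ and satisfies $|\tau(\omega x)|^j=|\sigma(ux^j)|>1$ for every embedding $\tau$, hence is strictly large.
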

\begin{proof} The statement is a consequence of \cite[Théorème 5.1]{BergeMartinet1989}, here we present a more direct proof.
First of all, by class field theory, it is well known that $\frA$ becomes principal in a suitable finite extension $K'$ of $K$.
We have $\frA \calo_{K'} = x \calo_{K'}$ for some $x \in \calo_{K'}$.
By Proposition \ref{prop:normalarge} there exists a positive integer $j$ such that $x^j$ is strictly large in $K'$. Let $u\in \calo_{K'}^\times$ be such that $|\sigma(u x^j)|>1$ for every embedding $\sigma:K'\to\CC$. Let $L=K'(\omega)$ where $\omega^j=u$. Let $\tau:L\to\CC$ be any embedding and $\sigma$ be the restriction of $\tau$ to $K'$. Then
$$|\tau(\omega x)|^j=|\sigma(ux^j)|>1$$
so that $|\tau(\omega x)|>1$.
\end{proof}

By looking at the proof of Proposition \ref{prop:esisteL}, we see that a uniform and stronger version holds true.
For every number field $K$ and every positive $j\in\NN$, we denote by $K_j$ the field obtained from $K$ by adding the $j$-th roots of every unit of $K$; it is a finite extension of $K$ by Dirichlet's Unit Theorem.
\begin{proposition}\label{prop:esisteLuniforme}
Let $K$ be a number field, and let $j> \frac{\rho_\infty(K)[K:\QQ]}{\log2}$. Every non-trivial principal ideal $\frA \subsetneq \calo_K$ is strictly large in $K_j$.
  \end{proposition}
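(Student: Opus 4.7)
The plan is to follow the proof of Proposition~\ref{prop:esisteL}, but with the exponent $j$ kept explicit throughout. The only new ingredient needed is the \emph{uniform} lower bound $n(x)\geq \log 2/[K:\QQ]$, valid for every generator $x\in\calo_K$ of a nontrivial integral ideal because in that case $|N_{K/\QQ}(x)|\geq 2$. This bound is what allows a single $j$ to work across all such ideals.

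I would begin by fixing a generator $x\in\calo_K$ of $\frA$ and combining the hypothesis $j>\rho_\infty(K)[K:\QQ]/\log 2$ with the lower bound on $n(x)$ to obtain
\[
n(x^j)=j\,n(x)\geq \frac{j\log 2}{[K:\QQ]}>\rho_\infty(K).
\]
Proposition~\ref{prop:normalarge} then applies to $x^j\calo_K$, producing a unit $u\in\calo_K^\times$ such that $|\sigma(ux^j)|>1$ for every $\sigma\in\Sigma$. To descend from the $j$-th power back to $x$, I would extract a $j$-th root of $u$: by definition of $K_j$ there is $\omega\in\calo_{K_j}$ with $\omega^j=u$, and $\omega$ is itself a unit of $\calo_{K_j}$, so $\omega x\in\calo_{K_j}$ still generates $\frA\calo_{K_j}$. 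For any embedding $\tau\colon K_j\to\CC$ with restriction $\sigma=\tau|_K$,
\[
|\tau(\omega x)|^j=|\sigma(u)|\,|\sigma(x)|^j=|\sigma(ux^j)|>1,
\]
whence $\omega x$ is a strictly large generator of $\frA\calo_{K_j}$.

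I do not anticipate any serious obstacle: the argument is simply a uniformized version of the proof of Proposition~\ref{prop:esisteL}, the uniformity coming solely from the trivial norm estimate $|N_{K/\QQ}(x)|\geq 2$ and from the fact that $K_j$ was defined precisely so as to contain all $j$-th roots of units of $K$. The only subtlety worth flagging is that one must pick the $j$-th root $\omega$ of the specific unit $u$ supplied by Proposition~\ref{prop:normalarge}; this is the whole point of adjoining roots of \emph{every} unit rather than a fixed fundamental system.
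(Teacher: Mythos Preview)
Your proof is correct and follows essentially the same approach as the paper: establish $n(x)\ge \log 2/[K:\QQ]$, deduce $n(x^j)>\rho_\infty(K)$, invoke Proposition~\ref{prop:normalarge} to get a unit $u$ making $ux^j$ strictly large, and then pass to a $j$-th root $\omega\in K_j$ exactly as in the proof of Proposition~\ref{prop:esisteL}. The paper simply abbreviates the last steps by saying ``one can choose $L=K_j$ in the proof of Proposition~\ref{prop:esisteL}'', whereas you have written them out in full.
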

  \begin{proof}
  Let $x \in \calo_K$ be a generator of $\frA$.
  We have $n(x)\geq \frac{\log 2}{[K:\QQ]}$, so that $n(x^j)=jn(x) >\rho_\infty(K)$. Then one can choose $L=K_j$ in the proof of Proposition \ref{prop:esisteL}.
  \end{proof}
\section{Largeness and Weil height}\label{sect:height}
Let $h$ denote the logarithmic Weil height of an algebraic number (see for example \cite[\S 1.5.7]{BombieriGubler2006}). For $x\in K$
$$h(x)=\frac 1 d\sum_{\sigma\in \Sigma} \max\{0,\log|\sigma(x)|\}+\log|a|$$
where $a$ is the leading coefficient of a primitive equation for $x$ over $\ZZ$; in particular
for an algebraic integer $x$ in $\calo_K$
$$h(x)=\frac 1 d\sum_{\sigma\in \Sigma} \max\{0,\log|\sigma(x)|\}.$$
It follows that 
\begin{equation}\label{eq:h>n} h(x)\geq \frac 1 d\log|N_{K/\QQ}(x)|=n(x) \quad \hbox{ for every non-zero algebraic integer $x$},\end{equation}
and equality holds exactly when $x\calo_K$ is large.\\
Then we can draw necessary conditions for largeness of ideals when some explicit minoration for the height of elements in $\calo_K$ is known. Namely, if there is a constant $c>0$ such that 
\begin{equation}\label{eq:maggiorazioneperinteri} h(x)>c\hbox{ for every } x\in\calo_K\setminus\calo_K^\times\end{equation}
and  $\frA$ is  a principal ideal such that $n(\frA)\leq c$, then $\frA$ cannot be large.\\
We are thus lead to make use of the well known \emph{Bogomolov property} $\PB$  and an additional property $\PS$ defined below.\\
Let $\mathcal{A}$ be a set of algebraic numbers.
  We put
    \begin{align*}
    b(\mathcal{A})&=\inf\{h(x)\ |\ x\in \mathcal{A}, x\not=0, x\hbox{ not a root of unity }\};\\
    s(\mathcal{A})&=\inf\{n(x)\ |\ x\in\mathcal{A}, x\not=0, N_{\QQ(x)/\QQ}(x)\not=\pm 1\}.
    \end{align*}
\begin{definition}\label{def:propBS} We say that a set $\mathcal{A}$ of algebraic numbers satisfies
 \begin{itemize} 
    \item[a)]   \emph{property $\PB$} if $b(\mathcal{A})>0$;
\item[b)]  \emph{property $\PS$}  if $s(\mathcal{A})>0$.
\end{itemize}
\end{definition}

In particular, if $x\in\calo_L\setminus \calo_L^\times $ for  an (infinite) extension $L$  with the property $\PB$, then $h(x)\geq c_L$ for some $c_L>0$ depending only on $L$ and thus, by \eqref{eq:h>n}, 
$$x\calo_{\QQ(x)} \hbox{ large} \Longrightarrow 
n(x)\geq c_L.
$$
Property $\PB$ is known for some special algebraic extensions, as the compositum $\Qtr$ of all totally real fields; it is also known for extensions having bounded local degrees at some finite place, and for Abelian extensions of number fields (see~\cite[Remark 5.2, p.1902]{AmorosoDavidZannier2014}). \\
Note however that property $\PB$ for the whole field  $L$, and even for the ring of integers of $L$, is much stronger that condition \eqref{eq:maggiorazioneperinteri}, which assumes a lower bound only for the height of algebraic \emph{integers} which are not units. 

\begin{examples}\label{exs:esempi}~

\begin{itemize}
\item[a)] Of course $\PS\Rightarrow \PB$ if $\mathcal{A}$ is a set of algebraic integers containing only a finite number of units. 
 
\item[b)] On the other hand, there exist sets of algebraic integers satisfying 
$\PB$ but not $\PS$: for example   the ring $\calo^{\rm ab}$ of integers of $\QQ^{\rm ab}$ satisfies property $\PB$ with $b(\calo^{\rm ab})\geq \frac{\log 5}{12}$, (see the main theorem in \cite{AmorosoDvornicich2000}), but  
$$n(1-\zeta_p)=\frac{\log(p)}{p-1}$$ 
for a prime $p$ and $\zeta_p$ a primitive $p$-th root of unity; therefore $s(\calo^{\rm ab})=0$.
\item[c)] It is  proven in \cite[Corollary 1]{AmorosoDvornicich2000} that  property $\PS$ holds for the set $\mathcal{A}$ of algebraic integers $x$ lying in an Abelian extension of $\QQ$ and such that $  x/{\overline{x}}$ is not a root of unity. More precisely 
$$n(x)\geq \frac{\log 5 } {12},\hbox{ for every $x\in\mathcal{A}$ }.$$ 
\item[d)] Recall that $\Qtr(i)$ is the compositum of all CM fields, see \cite[page 1902]{AmorosoDavidZannier2014}; therefore $x\in \Qtr(i)$ if and only if  $\QQ(x)$ is either a totally real or a CM field. Since the complex conjugation commutes with all the embeddings of $\Qtr(i)$ in $\CC$, 
we have $|\sigma(x)| = 1$ for some $\sigma \in \Sigma$ if and only if 
$|\sigma(x)| = 1$ for all $\sigma \in \Sigma$.
In this case, we just write $|x| = 1$.\\
By a result of Schinzel (apply~\cite[Corollary 1', p. 386]{Schinzel1973}, to the linear polynomial $P(z)=z-x$), if
$\mathcal{A}=\{x\in \Qtr(i)\ |\ |x|\not=1\}$ then 
\begin{equation}\label{eq:Schinzel} b(\mathcal{A})\geq \frac 1 2\log \frac{1+\sqrt{5}} 2.\end{equation}
\end{itemize}
\end{examples}

By Example \ref{exs:esempi}.d) we obtain the following
\begin{proposition}\label{prop:notlargeperSchinzel}
Let  $L=\Qtr(i)$, and let $x\in\calo_L $ be a non-zero element.
If $n(x)< \frac 1 2 \log\frac{1+\sqrt{5}}2$ then $x \calo_{\QQ(x)}$ is not large in $L$ except if $x$ is a unit.
\end{proposition}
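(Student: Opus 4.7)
The plan is to combine the observation that equality in \eqref{eq:h>n} characterises large generators with the height bound \eqref{eq:Schinzel} from Example \ref{exs:esempi}.d), together with Kronecker's theorem to handle the boundary case where the Schinzel bound does not apply.

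First I would unpack what it means for $x\calo_{\QQ(x)}$ to be large in $L=\Qtr(i)$. By the definition in an infinite extension, there is a finite intermediate field $K'$ with $\QQ(x)\subseteq K'\subseteq L$ such that $x\calo_{K'}$ is large in $K'$. Thus there is a unit $u\in\calo_{K'}^\times$ with $|\sigma(ux)|\geq 1$ for every embedding $\sigma:K'\to\CC$. Setting $y=ux$, the fact that every $|\sigma(y)|\geq 1$ means that all contributions to the height are the logarithms themselves, giving
\[
h(y)=\frac{1}{[K':\QQ]}\sum_{\sigma}\log|\sigma(y)|=n(y).
\]
Since $u$ is a unit, $n(y)=n(ux)=n(x)$, so $h(y)=n(x)$.

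Next I would use that $y\in\calo_{K'}\subseteq\calo_L$, so $y\in\Qtr(i)$, and separate two cases according to whether $|y|=1$ in the sense of Example \ref{exs:esempi}.d) (i.e.\ $|\sigma(y)|=1$ for one, hence all, embeddings). If $|y|\neq 1$, then the Schinzel bound \eqref{eq:Schinzel} applies to $y$ and gives
\[
n(x)=h(y)\geq \frac{1}{2}\log\frac{1+\sqrt{5}}{2},
\]
which contradicts the hypothesis $n(x)<\frac{1}{2}\log\frac{1+\sqrt{5}}{2}$. If instead $|y|=1$, then every Archimedean embedding of $y$ has absolute value $1$, so by Kronecker's theorem $y$ is a root of unity; in particular $y\in\calo_{K'}^\times$, whence $x=u^{-1}y$ is itself a unit.

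The argument is mostly bookkeeping; the only subtle point is the first case distinction, namely verifying that all the embeddings of $y$ into $\CC$ over $K'$ (rather than of $x$ over $\QQ(x)$) produce a valid application of Schinzel's theorem in $\Qtr(i)$. This is immediate because $K'\subseteq L$ and the height as well as the condition $|y|=1$ are intrinsic to $y$ as an element of $\overline{\QQ}$. No further estimate on the regulator or the covering radius of $K'$ is needed, which is what makes the Bogomolov-type bound of Schinzel the decisive ingredient here.
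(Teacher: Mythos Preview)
Your proof is correct and follows the same approach as the paper, combining the Schinzel height bound \eqref{eq:Schinzel} with the characterization of largeness via equality in \eqref{eq:h>n}. You are in fact slightly more careful than the paper in applying Schinzel to the large generator $y=ux$ rather than to $x$ itself, and in invoking Kronecker's theorem for the boundary case $|y|=1$; this is the right way to make the argument work for largeness in the infinite extension $L$.
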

\begin{proof}
If $x$ is a unit, then $x\calo_{\QQ(x)} = \calo_{\QQ(x)}$ is trivially large. If not, we have $|x|\not=1$, so that Schinzel result \eqref{eq:Schinzel} implies that $ h(x)\geq \frac 1 2\log \frac{1+\sqrt{5}} 2>n(x)$. Then the result follows from \eqref{eq:h>n}.
\end{proof}

\subsection{Example}  Let $p$ be one of the primes for which $\Q(\zeta_{p-1})$ has class number one. Note that $p$ splits completely in $\Q(\zeta_{p-1})$. Recall (\cite{MasleyMontgomery1976}) that the cyclotomic field $\Q(\zeta_m)$ has class number one if and only if $m$ is one of the following forty-four numbers:
\begin{multline*}
3, 4, 5, 6, 7, 8, 9, 10, 11, 12, 13, 14, 15, 16, 17, 18, 19, 20, 21, 22, 24, 25, 26, 27, 28,\\ 
30, 32, 33, 34, 35, 36, 38, 40, 42, 44, 45, 48, 50, 54, 60, 66, 70, 84, 90.
\end{multline*}
(which corresponds to twenty-nine distinct cyclotomic fields). Thus the relevant primes are
\begin{equation}
\label{list}
5, 7, 11, 13, 17, 19, 23, 29, 31, 37, 41, 43, 61, 67, 71.
\end{equation} 
\begin{question}
\label{qu:continued}
Let $p$ be one of the fifteen primes~\eqref{list} and let $\mathfrak{P}$ be a prime ideal over $p$ in the ring of integers of $\QQ(\zeta_{p-1})$. Is $\mathfrak{P}$ large?
\end{question}
Since $\Qab\subseteq\Qtr(i)$ and
$$
p\leq \Big(\frac{1+\sqrt{5}}2\Big)^{\varphi(p-1)/2}\hbox{ for }p=41,67\hbox{ and }71
$$
the answer to Question~\ref{qu:continued} is negative for these primes, by Proposition \ref{prop:notlargeperSchinzel}. 

Note that we could have tried the same strategy as in Proposition \ref{prop:notlargeperSchinzel}
but using the inequality of Example \ref{exs:esempi}b. This would work only for the primes $p$ satisfying
\begin{equation}\label{eq:valetutti}
p < 5^{\varphi(p-1)/12}.
\end{equation}
However, this inequality is satisfied by none of the primes in the list~\eqref{list}.

In the subsequent Theorem \ref{teo:risposta},  Question \eqref{qu:continued} will receive a complete answer.
 \subsection{Number theoretic minorations of the covering radius}
 In the light of the propositions \ref{prop:normalarge} and \ref{prop:esisteLuniforme}, it is useful to have some quantitative information on the covering radius $\rho_\infty(K)$ of a number field $K$ of degree $d$. \\
 Let $\lambda_1,\ldots,\lambda_r$ be the successive minima (w.r.t. the Euclidean norm $||\cdot ||_2$) of the lattice $\Lambda_K$.  
 It is well known (see for example \cite[Theorem 7.9]{MicciancioGoldwasser2002} that 
\begin{equation}\label{eq:disugminsucccovrad} \lambda_1\leq\ldots\leq \lambda_r \leq 2\rho_2\leq \sqrt{s}\lambda_r.\end{equation}
Moreover by \eqref{eq:disugcovrad} we have
 \begin{equation}\label{eq:disugcovrad2}
 \rho_\infty(K)\leq \rho_2(K)\leq  \sqrt{s}\rho_\infty(K).\end{equation}
 Therefore 
 \begin{align} \rho_\infty(K)&\geq \frac 1 {\sqrt{s}}\rho_2(K)\quad\hbox{from \eqref{eq:disugcovrad2}}\nonumber \\\
& \geq \frac 1 {2\sqrt{s}}\lambda_r\quad\hbox{from \eqref{eq:disugminsucccovrad}}\label{eq:keyminoration}
 \end{align} 
 
 \begin{theorem}\label{teo:regolatore}~
  \begin{itemize}
     \item [a)] Let $K$ be a number field such that $r\geq 1$. Then  $\rho_\infty(K)\geq \frac 1 2 R_K^{\frac 1r}$. 
     \item[b)] 
  There exists a constant $c>0$ such that $\rho_\infty(K)\geq c$ for every number field $K$ such that $r\geq1$.
  \end{itemize}
 \end{theorem}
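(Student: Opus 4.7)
The plan for (a) is a one-step projection-plus-volume argument. Consider the linear map $\pi\colon\calH\to\RR^r$ sending $(z_1,\dots,z_s)$ to $(z_1,\dots,z_{s-1})$. Since $\calH$ has equation $z_1+\cdots+z_s=0$, $\pi$ is a bijection; moreover, because the unit normal to $\calH$ is $(1,\dots,1)/\sqrt{s}$, $\pi$ distorts $r$-dimensional Lebesgue measure by the factor $1/\sqrt{s}$, as one sees from computing $\det(I_r+\mathbf{1}\mathbf{1}^T)=1+r=s$ for the inverse map. Using the identity $V_K=\sqrt{s}\,R_K$ recalled in Section~\ref{sect:theproblem}, the image lattice therefore satisfies
\begin{equation*}
\det\pi(\Lambda_K)=\frac{V_K}{\sqrt{s}}=R_K.
\end{equation*}
The key observation is that dropping a coordinate cannot increase the $L_\infty$-norm, so any $v\in\calH$ with $\|v\|_\infty\le\rho$ projects into $[-\rho,\rho]^r$. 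Hence the $\pi$-image of any $L_\infty$-ball of radius $\rho$ in $\calH$ is contained in a hypercube of side $2\rho$ and volume $(2\rho)^r$. By definition of the covering radius, $\Lambda_K$-translates of such balls cover $\calH$, so their $\pi$-images cover $\RR^r$, forcing $(2\rho_\infty(K))^r\ge R_K$, which is exactly (a).

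For (b), I would combine (a) with the classical fact that there exists an absolute constant $c_0>0$ such that $R_K\ge c_0$ for every number field $K$ with $r\ge 1$ (results of Zimmert, Friedman, and their collaborators). Part (a) then yields
\begin{equation*}
\rho_\infty(K)\;\ge\;\tfrac{1}{2}R_K^{1/r}\;\ge\;\tfrac{1}{2}c_0^{1/r}\;\ge\;\tfrac{1}{2}\min(c_0,1),
\end{equation*}
where the last step uses that $c_0^{1/r}$ is nondecreasing in $r$ when $c_0\le 1$ (so bounded below by $c_0^{1/1}=c_0$) and is at least $1$ otherwise. Thus (b) holds with $c=\tfrac{1}{2}\min(c_0,1)$.

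The argument for (a) is essentially a one-line volume computation once one exploits the fact that the normal to $\calH$ points in the very special direction $(1,\dots,1)$, which is exactly what makes the coordinate projection $\pi$ well-behaved for $\|\cdot\|_\infty$; no genuine obstacle appears there. The real difficulty sits in the external input required for (b), namely the uniform positive lower bound on $R_K$ for all fields of positive unit rank, which ultimately rests on analytic properties of Dedekind zeta functions rather than on elementary geometry of numbers.
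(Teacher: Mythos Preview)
Your argument is correct and, for (a), takes a genuinely different route from the paper's. The paper goes through successive minima: it records $\rho_\infty(K)\ge \frac{1}{2\sqrt{s}}\lambda_r$ via \eqref{eq:disugminsucccovrad}--\eqref{eq:disugcovrad2}, invokes Minkowski's Second Theorem to bound $\lambda_r$ below in terms of $V_K^{1/r}=(\sqrt{s}\,R_K)^{1/r}$, and then optimises the constant $\frac{\sqrt{r}}{2\sqrt{s}}(\sqrt{s})^{1/r}$ over $s=r+1$ to reach $\tfrac12$. You bypass successive minima entirely: the coordinate projection $\calH\to\RR^r$ is $L_\infty$-nonexpansive and sends $\Lambda_K$ to a lattice of covolume exactly $R_K$ (indeed this is just the definition of the regulator as an $r\times r$ minor of the log matrix), so the covering inequality $(2\rho_\infty(K))^r\ge R_K$ falls out of a one-line volume comparison. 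Your approach is more elementary and delivers the constant $\tfrac12$ without any optimisation step; the paper's route through the $\lambda_i$ is less direct here but dovetails with the subsequent use of successive minima in Theorem~\ref{teo:eccolo}. For (b) the two proofs coincide: both feed Zimmert's lower bound on $R_K$ into part (a), the paper citing the stronger exponential form $R_K>c_0c_1^{d}$ while your weaker input $R_K\ge c_0$, combined with $c_0^{1/r}\ge\min(c_0,1)$, already suffices.
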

 
 \begin{proof}
 Recall that $V_K$ is the volume of a fundamental domain for $\Lambda_K$.
 By Minkowski's Second Theorem \cite[Theorem 1.5]{MicciancioGoldwasser2002}
 $$(\lambda_1\cdot\ldots\cdot \lambda_r)^{\frac 1 r}\geq \sqrt{r}\cdot  V_K^{\frac 1r}= \sqrt{r}\cdot (\sqrt{s}R_K)^{\frac 1r}.$$
 Then from \eqref{eq:keyminoration}
 \begin{align}\label{eq:covradreg} \rho_\infty(K)\geq \frac {\sqrt{r}}{2\sqrt{s}}(\sqrt{s}R_K)^{\frac 1r}\geq c' R_K^{\frac 1r},\end{align}
 for a suitable constant $c'$.  Studying the function $\frac {\sqrt{r}}{2\sqrt{s}}(\sqrt{s})^{\frac 1 r}$ with $s=r+1$ we see that $c'=\frac 1 2$. This proves a). Then b)  follows from the well known fact that there exist constants $c_0>0$ and $c_1>1$ such that $R_K>c_0\cdot c_1^d$ (\cite[\S 3]{Zimmert1981}, see also \cite{FriedmanSkoruppa1999}. \end{proof}
 
 The next results provide some lower bounds for the covering radius involving the Weil height on $K$.\\
For $n=1,...,r$ we put, as in \cite[Page 9]{AmorosoDavid2021}
$$\mu_K(n)=\inf_{\atop{v_1,\ldots, v_n\in\calo_K^\times}{{\rm multipl. indep.}}}(h(v_1)\cdot\ldots\cdot h(v_n)),$$
 
 \begin{theorem}
 \label{teo:eccolo}
 For $n=1,\ldots,r$, we have
 $$\rho_\infty(K)\geq \frac d s\mu_K(n)^{\frac 1 n}.$$
 In particular $$\rho_\infty(K)\geq \frac d s b(\calo_K^\times).$$
 \end{theorem}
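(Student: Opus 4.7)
The plan is to use the chain already assembled in \eqref{eq:keyminoration}, namely $\rho_\infty(K)\geq \frac{1}{2\sqrt{s}}\lambda_r$, and to bound $\lambda_r$ from below in terms of $\mu_K(n)$. The bridge between the logarithmic lattice $\Lambda_K$ and the Weil height of a unit is the identity
$$h(u)=\frac{1}{2d}\,\|\ell(u)\|_1,\qquad u\in\calo_K^\times.$$
This is elementary but central: since $u$ is a unit, $\sum_{\sigma\in\Sigma}\log|\sigma(u)|=0$, so the positive and negative parts of this sum are equal; applying the defining formula for $h$ (whose $\max\{0,\cdot\}$ picks out the positive part) gives the identity above.

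Combining this with the standard inequality $\|\ell(u)\|_1\leq\sqrt{s}\,\|\ell(u)\|_2$ in $\RR^s$ I obtain
\begin{equation*}
\|\ell(u)\|_2\;\geq\;\frac{2d}{\sqrt{s}}\,h(u)\qquad\text{for every }u\in\calo_K^\times.
\end{equation*}

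Now I would pick units $v_1,\ldots,v_r\in\calo_K^\times$ realising the successive Euclidean minima of $\Lambda_K$, i.e.\ with $\ell(v_1),\ldots,\ell(v_r)$ linearly independent and $\|\ell(v_i)\|_2=\lambda_i$. Since roots of unity lie in the kernel of $\ell$, linear independence of the $\ell(v_i)$ is equivalent to multiplicative independence of the $v_i$; in particular any $n\leq r$ of them form a multiplicatively independent tuple. Taking the first $n$ and multiplying the unit-by-unit bound, I get
$$
\prod_{i=1}^n \lambda_i \;=\; \prod_{i=1}^n \|\ell(v_i)\|_2 \;\geq\; \left(\frac{2d}{\sqrt{s}}\right)^{\!n}\prod_{i=1}^n h(v_i)\;\geq\;\left(\frac{2d}{\sqrt{s}}\right)^{\!n}\mu_K(n),
$$
where the last inequality is the very definition of $\mu_K(n)$.

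Since $\lambda_1\leq\cdots\leq\lambda_r$, one has $\lambda_n^n\geq \prod_{i=1}^n\lambda_i$, hence $\lambda_n\geq\frac{2d}{\sqrt{s}}\,\mu_K(n)^{1/n}$, and $\lambda_r\geq\lambda_n$ because $n\leq r$. Plugging this into \eqref{eq:keyminoration} yields
$$\rho_\infty(K)\;\geq\;\frac{1}{2\sqrt{s}}\,\lambda_r\;\geq\;\frac{d}{s}\,\mu_K(n)^{1/n},$$
which is the first statement. The ``in particular'' follows on taking $n=1$, since $\mu_K(1)$ is precisely the infimum of $h(v)$ over non-torsion units, i.e.\ $b(\calo_K^\times)$. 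The only delicate step is the first one (identifying $h(u)$ with $\|\ell(u)\|_1/(2d)$ on the unit group, together with remembering that the entries of $\ell$ are weighted by local degrees); the remaining ingredients are the comparison of $L_1$ and $L_2$ norms in $\RR^s$, the AM--GM-flavoured inequality $\lambda_n^n\geq\prod\lambda_i$, and the already-proved bound \eqref{eq:keyminoration}.
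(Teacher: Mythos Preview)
Your proof is correct and follows essentially the same route as the paper: the key identity $2d\,h(u)=\|\ell(u)\|_1$, the comparison $\|\cdot\|_1\le\sqrt{s}\,\|\cdot\|_2$ in $\RR^s$, and the bound \eqref{eq:keyminoration}. The only cosmetic difference is that the paper bounds $\lambda_n$ via its $\inf\max$ characterization and then uses $\max_i h(v_i)\ge\mu_K(n)^{1/n}$, whereas you fix a realizing tuple for the successive minima and pass through the product $\prod_{i\le n}\lambda_i$ before applying $\lambda_n^n\ge\prod_{i\le n}\lambda_i$; the two chains are equivalent.
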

 \begin{proof}
For every $u\in\calo_K^\times$, by \eqref{eq:disuglp},
\begin{equation}\label{eq:due} 2dh(u)= ||\ell(u)||_1 \leq  \sqrt{s}||\ell(u)||_2.\end{equation} 
 
There exist multiplicatively independent $u_1,\ldots, u_r \in\calo_K^\times$ such that  $\lambda_i=||\ell(u_i)||_2$ (\cite[Theorem 1.2]{MicciancioGoldwasser2002}). 
We notice that for $n=1,\ldots, r$
\begin{align*} \lambda_n & =\inf_{\atop{v_1,\ldots, v_n\in\calo_K^\times} {\rm multipl. indep.}}  \max\{ ||\ell(v_1)||_2,\ldots,  ||\ell(v_n)||_2\}\\
&\geq \frac {2d} {\sqrt{s}} \inf_{\atop{v_1,\ldots, v_n\in\calo_K^\times} {\rm multipl. indep.}} \max \{ h(v_1),\ldots,  h(v_n)\}\quad\hbox{ by \eqref{eq:due}.}
\end{align*}
It follows from \eqref{eq:keyminoration} that
\begin{align*} \rho_\infty(K) &\geq \frac 1 {2\sqrt{s}} \lambda_n\geq \frac d s\inf_{\atop{v_1,\ldots, v_n\in\calo_K^\times} {\rm multipl. indep.}}  \max \{ h(v_1),\ldots,  h(v_n)\}\geq \frac d s\mu_K(n)^{\frac 1 n}.\end{align*}
\end{proof}

By inequality \eqref{eq:covradreg}, it is possible to use known lower bounds for the regulator $R_K$ in order to deduce lower bounds for the covering radius $\rho_\infty(K)$. See \cite[\S 3.5, 15]{Narkiewicz2004} for an overview on evaluations of the regulator and \cite[\S 8]{Narkiewicz2004} for the special case of Abelian extensions. Moreover \cite[Proposition 3.3]{AmorosoDavid2021} provides a tool allowing to improve the bound from below of extensions $K$ for which $\calo_K^\times$ has property $\PB$. In particular \cite[Corollaire 3.5]{AmorosoDavid2021} deals with the case of totally real and CM field.

Some remarkable results are collected below:
 
\begin{itemize}
\item[a)] Let $L$ be an infinite extension. Assume that $\calo_L^\times$ satisfies property $\PB$ and let $c_L= b(\calo_L^\times)=\inf_{u\in\calo_L^\times\setminus\calo_L^{\times,\rm{tors}}} h(u) >0$. Then by Theorem \ref{teo:eccolo}$$ \rho_\infty(K)\geq c_L,$$ for every number field $K \subseteq L$ such that $r(K) \geq 1$.
\item[b)] In particular, if a number field $K$ is contained in $\QQ^{\rm{tr}}(i)$, then by Example \ref{exs:esempi} d),
$$\rho_\infty(K)\geq \frac 1 2 \log \frac{1+\sqrt{5}} 2.$$
\item[c)] Silverman's theorem \cite{Silverman1984} allows to construct fields with  fixed degree and covering radius arbitrarily large. It suffices to choose a non-CM field of discriminant large enough.  
\end{itemize}

\section{An algorithm for largeness}\label{sect:algorithm}

  We describe an algorithm that solves the following problem:
  \begin{problem}\label{problem1}
  Given a non-zero algebraic number $x\in K$ and a bound $B>0$,
  find all units $u\in \UK$ such that
  $|\s(xu)| \geq  B$ for all $\s \in \Sigma$.
\end{problem}
When $B=1$ this algorithm detects when a principal ideal is large.
\subsection{Basic results}

\begin{lemma}\label{upperbound}
  If $u \in \UK$ is a solution of Problem \ref{problem1},
  then for all $S \subset \Sigma$, we have
  $$B^{\# S} \leq \prod_{\s \in S} |\s(xu)| \leq B^{\# S - d} N(x)$$
  where $\# S$ denotes the cardinality of $S$.
\end{lemma}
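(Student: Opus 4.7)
The statement is an immediate two-sided bound obtained by splitting the full product $\prod_{\sigma \in \Sigma} |\sigma(xu)| = |N_{K/\QQ}(xu)|$ into the $S$-part and its complement and using the hypothesis $|\sigma(xu)| \geq B$ termwise. So the plan is essentially a one-line computation carried out in two directions.

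For the lower bound I would simply multiply the $\#S$ hypotheses $|\sigma(xu)|\geq B$, one for each $\sigma\in S$, to get $\prod_{\sigma\in S}|\sigma(xu)|\geq B^{\#S}$. No arithmetic input about $K$ is needed here beyond the assumption on $u$.

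For the upper bound I would first note that, by the definition of $\Sigma$ as the set of all $d$ Archimedean embeddings (so that each complex place is counted with its conjugate), one has
\[
\prod_{\sigma\in\Sigma}|\sigma(xu)| \;=\; |N_{K/\QQ}(xu)| \;=\; |N_{K/\QQ}(x)|\cdot|N_{K/\QQ}(u)| \;=\; N(x),
\]
the last equality because $u\in\UK$ forces $|N_{K/\QQ}(u)|=1$. Splitting the left-hand product along $\Sigma=S\sqcup(\Sigma\setminus S)$ and solving gives
\[
\prod_{\sigma\in S}|\sigma(xu)| \;=\; \frac{N(x)}{\prod_{\sigma\in\Sigma\setminus S}|\sigma(xu)|}\;\leq\;\frac{N(x)}{B^{d-\#S}}\;=\;B^{\#S-d}\,N(x),
\]
where the inequality uses the hypothesis applied to each of the $d-\#S$ embeddings in $\Sigma\setminus S$.

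There is no real obstacle: the only point to be careful about is the cardinality convention for $\Sigma$ (counting each complex embedding together with its conjugate so that $\#\Sigma=d$ and $\prod_{\sigma\in\Sigma}|\sigma(\cdot)|=|N_{K/\QQ}(\cdot)|$) and the fact that $|N_{K/\QQ}(u)|=1$, both of which are standard.
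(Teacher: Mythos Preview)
Your argument is correct and is essentially the same as the paper's: both obtain the lower bound by multiplying the termwise inequalities over $S$, and the upper bound by combining $\prod_{\sigma\in\Sigma}|\sigma(xu)|=N(xu)=N(x)$ with the termwise lower bound $B^{d-\#S}$ on the complementary product.
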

\begin{proof}
Let $u \in \calo_K^\times$ be a solution of Problem \ref{problem1}.
For $\s \in S$, we have the trivial inequality $B \leq |\s(xu)|$.
Multiplying these inequalities gives the announced left inequality.

For $\s \in S$, we have the inequality $|\s(xu)| \leq |\s(xu)|$, and
for $\s \not\in S$, we have $B \leq |\s(xu)|$.
Multiplying these inequalities gives $B^{d-\# S} \prod_{\s \in S} |\s(xu)| \leq N(xu)$.
But $u$ is a unit, hence $N(xu) = N(x)$, whence the result.
\end{proof}

\begin{proposition}\label{smallnorm}
  If $N(x) < B^d$, then Problem \ref{problem1} has no solution.
\end{proposition}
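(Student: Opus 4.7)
The plan is to derive the result as an immediate consequence of Lemma~\ref{upperbound} by specializing to the full set of Archimedean embeddings. Recall that $\Sigma$ has cardinality $d = r_1 + 2r_2$: each complex place contributes two conjugate embeddings, and the product $\prod_{\sigma \in \Sigma} |\sigma(y)|$ equals $|N_{K/\QQ}(y)|$ for any $y \in K^\times$.

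Concretely, I would argue by contrapositive. Suppose $u \in \UK$ is a solution of Problem~\ref{problem1}, so that $|\sigma(xu)| \geq B$ for every $\sigma \in \Sigma$. Applying the left-hand inequality of Lemma~\ref{upperbound} with $S = \Sigma$ yields
\[
B^{d} \;=\; B^{\#\Sigma} \;\leq\; \prod_{\sigma \in \Sigma} |\sigma(xu)| \;=\; |N_{K/\QQ}(xu)|.
\]
Since $u$ is a unit, $|N_{K/\QQ}(u)| = 1$, hence $|N_{K/\QQ}(xu)| = |N_{K/\QQ}(x)| = N(x)$. Combining gives $B^{d} \leq N(x)$, which contradicts the hypothesis $N(x) < B^{d}$. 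Therefore no such $u$ can exist.

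There is essentially no obstacle here: the proposition is a clean corollary of the lower bound already packaged in Lemma~\ref{upperbound}, and the only point worth flagging is the elementary but easy-to-miss observation that $\#\Sigma = d$ rather than $s$, so that the product over $\Sigma$ recovers the full absolute norm.
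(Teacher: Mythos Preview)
Your proof is correct and follows exactly the paper's approach: apply Lemma~\ref{upperbound} with $S = \Sigma$ to obtain $B^d \leq N(xu) = N(x)$, contradicting the hypothesis. The paper records this in a single line, and your only addition is the (helpful) reminder that $\#\Sigma = d$ rather than $s$.
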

\begin{proof}
Apply Lemma \ref{upperbound} with $S = \Sigma$.
\end{proof}

\begin{proposition}\label{finitesol}
  Given $x\in K$ and $B>0$, Problem \ref{problem1} has only finitely many solutions.
\end{proposition}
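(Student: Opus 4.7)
The plan is to use Lemma \ref{upperbound} to trap each coordinate $\log|\sigma(u)|$ into a bounded interval, and then invoke the discreteness of the unit lattice $\Lambda_K$ together with the finiteness of the torsion subgroup of $\UK$.

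First I would show that each absolute value $|\sigma(xu)|$ is bounded both from below and from above, uniformly over all solutions $u$. The lower bound $|\sigma(xu)|\geq B$ is immediate from the hypothesis. For the upper bound, I would apply Lemma \ref{upperbound} to $S=\Sigma\setminus\{\tau\}$ for an arbitrary fixed $\tau\in\Sigma$, obtaining
\begin{equation*}
B^{d-1}\leq \prod_{\sigma\neq \tau}|\sigma(xu)|.
\end{equation*}
Since $u$ is a unit we have $\prod_{\sigma\in\Sigma}|\sigma(xu)|=|N(xu)|=|N(x)|$, and dividing yields
\begin{equation*}
B\leq |\tau(xu)|\leq B^{1-d}|N(x)|.
\end{equation*}

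Next I would translate this into a bound on the logarithmic embedding of $u$. Since $x\in K^\times$ is fixed, $|\sigma(x)|$ is a fixed positive real for every $\sigma\in\Sigma$, so the inequalities above imply that each $\log|\sigma(u)|$ lies in a bounded interval depending only on $x$ and $B$. Hence $\ell(u)$ lies in a bounded region of $\RR^s$.

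Finally, since $\Lambda_K=\ell(\UK)$ is a discrete lattice in $\RR^s$ by Dirichlet's Unit Theorem, only finitely many lattice points $\ell(u)$ can lie in this bounded region. Moreover, the kernel of the homomorphism $\ell\colon\UK\to \Lambda_K$ is the finite group of roots of unity in $K$. Combining these two finiteness statements yields that only finitely many $u\in\UK$ can satisfy $|\sigma(xu)|\geq B$ for all $\sigma\in\Sigma$, as required. There is no real obstacle here: the only substantive input is Lemma \ref{upperbound} together with the classical structure theorem for $\UK$.
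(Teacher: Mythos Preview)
Your argument is correct but follows a genuinely different route from the paper's. You bound each $|\sigma(xu)|$ above and below (via Lemma~\ref{upperbound} applied essentially to singletons), conclude that $\ell(u)$ lies in a bounded box in $\RR^s$, and finish with the discreteness of $\Lambda_K$ and the finiteness of the torsion of $\UK$. The paper instead applies Lemma~\ref{upperbound} to arbitrary subsets $S\subset\Sigma$ to bound the elementary symmetric functions of the $\sigma(u)$, thereby bounding the integer coefficients of the characteristic polynomial $P_u\in\ZZ[X]$; finiteness then follows because only finitely many monic integer polynomials have all coefficients bounded. Your approach is shorter and more in keeping with the logarithmic-embedding framework set up earlier in the paper; the paper's approach, by contrast, yields explicit coefficient bounds (e.g.\ $|a_k|\le\binom{d}{k}(||x||_1/(kB))^k$), which the authors exploit immediately afterwards in their Remark to estimate the cost of a naive enumeration and to motivate Algorithm~\ref{algo}. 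One small omission: the proposition is stated for $x\in K$ rather than $x\in K^\times$, and the paper disposes of $x=0$ via Proposition~\ref{smallnorm}; you silently assume $x\in K^\times$, which is harmless but worth a one-line remark.
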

\begin{proof}
By Proposition \ref{smallnorm}, Problem \ref{problem1} has no solution for $x=0$.
We assume now that $x\neq 0$.

Dividing the right inequality of Lemma \ref{upperbound} by $\prod_{\s \in S}|\s(x)| \neq 0$ gives
$$ \prod_{\s\in S} |\s(u)| \leqslant  B^{\# S - d} \prod_{\s \not\in S}|\s(x)|
\leqslant B^{\#S-d} \left( \frac {||x||_1}{d-\# S} \right)^{d-\# S} \; .$$
Let us consider the characteristic polynomial of $u$ for the extension $K/\Q$ and
denote it by $P_u$.
Since $u$ is a unit, $P_u \in \ZZ[X]$ and $P_u$ is monic.
The roots of $P_u$ in $\C$ are the real or complex numbers $\s(u)$.
Using the above inequality and expressing the coefficient $a_k$ of $X^k$ in $P_u$ in terms of the roots of $P_u$,
we deduce that $|a_k|$ is bounded independently of $u$.
For example we have
$|a_d| = |a_0|=1$ since $u$ is a unit
and 
$$|a_k| \leq \binom{d}{k} \left( \frac {||x||_1}{kB} \right)^k$$
for the other values of $k$.
Since this bound does not depend on $u$, there are only finitely many
possibilities for $P_u$, hence for $u$.
\end{proof}

\noindent{\bf Remark.}
We could turn the proof of Proposition \ref{finitesol} into an algorithm
that tests all polynomials with coefficients within some bounds depending on $B$ and $x$.
Explicitly, using the bounds given during the proof,
we see that, for a given number field of fixed degree $d$,
the number of polynomials that need to be tested, is proportional to
$\ds \left( \frac {||x||_1}{B} \right)^\alpha$
with $\ds \alpha = \sum_{k=1}^{d-1} k = \frac {d(d-1)}2$.
The number of polynomials that need to be tested in the algorithm
is therefore exponential in the input $x$, hence very large,
and the resulting algorithm is very slow.

We will give another algorithm in the next section.

\subsection{An algorithm to solve Problem \ref{problem1}}
If $A=(a_{i,j})$ is a matrix (or a vector) with real entries,
we write $A \geq 0$ to indicate that $a_{i,j} \geq 0$ for all $i$ and $j$.
We also write $A\geq B$ if $A-B \geq 0$.
We will use the fact that, if  $A\geq 0$ and $B\geq 0$, then $AB \geq 0$.

We recall that, for a number field $K$ of degree $d$, with $r_1$ real embeddings and $r_2$ complex embeddings, we have set $s = r_1+r_2$ and $r = s-1$.
If $u_1,\dots,u_r$ are generators of $\UK$ modulo torsion,
we define the matrix $L$ of size $r\times s$ by
$$ L_{i,j} = \log | \s_j(u_i)|$$
if $\s_j$ is real and 
$$ L_{i,j} = 2 \log | \s_j(u_i)|$$
if $\s_j$ is complex.
The $i$-th row of $L$ is equal to $\ell(u_i)$.
At last, we define the column vector $V = (1,\dots,1)^t \in \Z^s$.

\medskip
Using logarithms, we can reformulate our Problem \ref{problem1} as:

\begin{problem}\label{problem2}
  Given a non-zero algebraic number $x\in K^\times$ and a bound $B>0$,
  find all rows $U \in \Z^r$ such that
  $$ UL + X \geq 0 $$
  where $X = \ell(x)-\calL(B)$.
\end{problem}

Formulated in this way, we see that Problem \ref{problem2} can be solved by integer linear programming. However, the situation is not generic here, and a simpler algorithm is given below.

\begin{algo}\label{algo}~
  
  Input : $x\in K$, $x\neq 0$, and $B>0$.

  Output : all solutions $U \in \Z^r$ of Problem \ref{problem2}.
  
  {\tt
    \begin{enumerate}
    \item Compute the matrix $L$ of size $r\times s$
      and the column vector $V$ of size $s$ as in the above definition.
    \item Remove from $L$ its last column and call $M$ the inverse of this matrix.
      Concatenate $M$ with the row vector of size $r$ whose all entries are $0$
      and obtain a new matrix $M$ of size $s\times r$.
      
      For the next steps, we use the notation $N_{,j}$ for the $j$-th column of a matrix $N$.
    \item Define the matrix $N^+$ of size $s\times r$, such that, for $1\leq j \leq r$, 
      $N^+_{,j} = M_{,j} - \min_i\{M_{i,j}\} V$.
    \item Define the matrix $N^-$ of size $s\times r$, such that, for $1\leq j \leq r$, 
      $N^-_{,j} = M_{,j} - \max_i\{M_{i,j}\} V$.

    \item Compute the row vector $X = \ell(x) - \calL(B)$.
    \item For all row vector $U \in \Z^r$ in the range
      $ -XN^+ \leq U \leq -XN^-$, test if $UL+X \geq 0$.
      If this is the case, output $U$.
    \end{enumerate}
    }
\end{algo}

\begin{proposition}\label{proofalgo}
  Algorithm \ref{algo} is correct.

  Furthermore, when the number field $K$ is fixed,
  the number of $U$ that need to be tested during step 6
  is at most proportional to $( \log N(x)-  d\log B +1)^r$.

\end{proposition}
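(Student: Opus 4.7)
The plan is to show that the search box $-XN^+ \le U \le -XN^-$ used in step 6 contains every solution of Problem \ref{problem2}; since step 6 tests the defining inequality $UL+X \ge 0$ explicitly, correctness then follows. The complexity bound will drop out by measuring the side lengths of that box.

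The key structural identities are $LV=0$ and $LM=I_r$. The first is immediate: each row $\ell(u_i)$ of $L$ has entries summing to $\log|N(u_i)|=0$ because $u_i$ is a unit. The second holds by construction of $M$. The first $r$ columns of $L$ form an invertible $r\times r$ matrix (any $r$ of the $s=r+1$ columns of the rank-$r$ matrix $L$ span $\RR^r$, because the whole set of columns sums to zero by $LV=0$); and the appended zero last row of $M$ annihilates the last column of $L$ in the product.

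Now take any $U\in\ZZ^r$ with $Y := UL+X \ge 0$. I would right-multiply $UL = Y-X$ by $M$ to obtain $U = YM - XM$, and multiply by $V$ to obtain $YV = XV$, using $LV=0$. Here $XV = \log|N(x)| - d\log B$, because real embeddings contribute $\log B$ and each conjugate complex pair contributes $2\log B$, totalling $d\log B$. Since $Y\ge 0$ with entries summing to $XV$, one gets for each column $j$
$$(\min_k M_{kj})\,XV \;\le\; (YM)_j = \sum_i Y_i M_{ij} \;\le\; (\max_k M_{kj})\,XV.$$
Subtracting $(XM)_j$ from each side gives exactly $-(XN^+)_j \le U_j \le -(XN^-)_j$, by the definitions of $N^+$ and $N^-$. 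If $XV<0$ then $YV<0$ is inconsistent with $Y\ge 0$, so Problem \ref{problem2} has no solution, in agreement with Proposition \ref{smallnorm}; hence we may assume $XV\ge 0$, which is precisely what licenses the two-sided bound above.

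For the count, subtracting the endpoint formulas gives interval length $(\max_k M_{kj}-\min_k M_{kj})\,XV$ in coordinate $j$, whose constant prefactor depends only on the fixed field $K$. Taking the product over $j=1,\dots,r$ and adding $1$ per coordinate to pass from length to number of integer lattice points yields the claimed bound proportional to $(\log|N(x)| - d\log B + 1)^r$. The proof is essentially a matrix manipulation with no serious obstacle; the only points demanding a brief check are the invertibility of the first $r$ columns of $L$ and the sign of $XV$ that allows us to bound each $(YM)_j$ by the column extremes of $M$.
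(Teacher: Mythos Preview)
Your proof is correct and follows essentially the same route as the paper: both arguments rest on the identities $LV=0$ and $LM=I_r$ and arrive at the same box $-XN^+\le U\le -XN^-$, with the identical side-length computation $(\max_k M_{kj}-\min_k M_{kj})\,XV$ for the complexity bound. The only cosmetic difference is that the paper deduces the bounds by right-multiplying $UL+X\ge 0$ by the sign-definite matrices $N^\pm$ (using $N^+\ge 0$, $N^-\le 0$, $LN^\pm=I_r$), whereas you write $U=YM-XM$ and bound $(YM)_j$ via a convex-combination argument using $Y\ge 0$ and $YV=XV$; these are two phrasings of the same inequality.
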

\begin{proof}
We follow the algorithm step by step.
\begin{enumerate}
\item By Dirichlet's Unit Theorem, the matrix $L$ constructed in step 1 has rank $r$.
  Since the absolute norm of a unit is equal to $1$, we have $LV = 0$,
  hence $V$ is in the right kernel of $L$.
\item By Dirichlet's Unit Theorem,
  when we remove any column of $L$, the determinant of the remaining square matrix
  is always the same and equals the regulator of $K$, which is not $0$.
  This matrix of size $r \times r$ is invertible.
  By construction, we have $LM = I_r$.
\item For all columns of $N^+$, we have $N^+_{,j} = M_{,j} - \min_i\{M_{i,j}\} V$.
  Let $i(j)$ be the index such that $\min_i\{M_{i,j}\} = M_{i(j),j}$.
  We have $N^+_{i,j} = M_{i,j} - M_{i(j),j} \geq 0$ by minimality.
  Hence $N^+ \geq 0$.
  Because $V$ is in the right kernel of $L$, we deduce that
  $LN^+ = LM = I_r$.
\item Using a similar argument, we can prove that
  $N^- \leq 0$ and $LN^- = I_r$.
\item There is nothing to say here.
\item If $U$ is a solution of Problem \ref{problem2},
  then $UL+X \geq 0$.
  But $N^+ \geq 0$, hence $ULN^+ + XN^+ \geq 0$.
  By the relation $LN^+ = I_r$, we deduce $U \geq -XN^+$.
  By $N^- \leq 0$, we deduce $ULN^- + XN^- \leq 0$, and $U \leq -XN^-$.
\end{enumerate}

In order to bound the number of $U$ tested in step $6$,
we observe that $-XN^+ \leq U \leq -XN^-$.
For the $j$-th entry, this is explicitly
$-XN^+_{,j} \leq U_j \leq -XN^-_{,j}$
hence the number of $U_j$ that need to be tested is at most equal to
$-XN^-_{,j} + XN^+_{,j}+1$. But we have
\begin{align*} -XN^-_{,j} + XN^+_{,j} & = X(M_{,j}-\min_i\{M_{i,j}\}V - M_{,j}+\max_i\{M_{i,j}\}V)\\
& = (\max\{M_{,j}\} - \min\{M_{,j}\}) XV
\end{align*}
We also have $XV = \log N(x)- d\log B$.
If $\log N(x) -d\log B < 0$, we have seen in Proposition \ref{smallnorm} that the problem has no solution. When
$\log N(x) -d\log B \geq 0$, we have
$$-XN^-_{,j} + XN^+_{,j}+1
\leq 
(\max\{M_{,j}\} - \min\{M_{,j}\}+1) (\log N(x) - d \log B + 1)$$
whence a bound for the number of $U$ by
$$ (\log N(x) - d\log B+1)^r \times \prod_j (\max\{M_{,j}\} - \min\{M_{,j}\}+1)$$

\end{proof}

\section{A complete example}\label{sect:cyclotomic}

In this section, we shall give a detailed execution of Algorithm \ref{algo}, which answers Question \ref{qu:continued} for $p=17$.
All computations were done using PARI/gp \cite{PARI2}.

\medskip
Let us consider the $16$-th cyclotomic field $K$ equal to $\Q(\z) = \Q[X]/\Phi_{16}(X)$,
where $\Phi_{16}(X) = X^8+1$.

In this field, we consider $x = -\z^7 - \z^3 + \z^2$.
We have $N(x) = 17$, hence $x$ is a generator of a principal prime ideal above $17$.
We are looking for another generator $x'$ of this principal ideal
such that $|\s(x')| \geq 1$ for all $\s\in\Sigma$. We need to solve Problem \ref{problem2} with $B = 1$.

We follow here the steps of Algorithm \ref{algo}.

\begin{enumerate}
\item
  For this field, we have $d=8$, $r_1 = 0$ and $r_2 = 4$.
  In this case, we have $s = r_1+r_2 = 4$ and $r = 3$.
  
  The units of $K$ are generated by
  $u_0 = \z$, $u_1 = -\z^6 + \z^2 - 1$, $u_2 = \z^2+\z+1$ and $u_3=-\z^6+\z^3-\z$,
  where $u_0$ generates the torsion part and $u_1,u_2,u_3$ generate the free part.
  The matrix $L$ is equal to
  $$L =
  \left(
  \begin{array}{rrrr}
  -1.76274&-1.76274& 1.76274& 1.76274\\
  -0.33031& 2.09306&-2.89946& 1.13671\\
   1.13671&-2.89946&-0.33031& 2.09306
  \end{array} \right)
  $$
  We easily check that $L\begin{pmatrix}1\\1\\1\\1\end{pmatrix} = 0$.
\item
  We have
  $$M = 
  \left( \begin{array}{rrrr}
  -0.46575&-0.29144& 0.07276\\
  -0.17430&-0.07276&-0.29144\\
  -0.07276&-0.36421&-0.21868\\
  0 & 0 & 0
  \end{array} \right)
  $$
  We can check that $LM = I_3$, the identity matrix of order $3$.
\item We have
  $\min(M_{,1}) = -0.46575$,
  $\min(M_{,2}) = -0.36421$,
  $\min(M_{,3}) = -0.29144$.
  This gives
 $$N^+ = \left( \begin{array}{rrr}
 0       & 0.07276 & 0.36421 \\
 0.29144 & 0.29144 & 0       \\
 0.39298 & 0       & 0.07276 \\
 0.46575 & 0.36421 & 0.29144 \\
 \end{array}\right)$$
  We can check that $N^+ \geq 0$ and $LN^+ = I_3$.
\item We have
  $\max(M_{,1}) = 0$,
  $\max(M_{,2}) = 0$,
  $\max(M_{,3}) = 0.07276$.  This gives
 $$N^- = \left( \begin{array}{rrr}
 -0.46575 & -0.29144 & 0       \\
 -0.17430 & -0.07276 & -0.36421\\
 -0.07276 & -0.36421 & -0.29144\\
 0        & 0        & -0.07276\\
 \end{array}\right)$$
  We can check that $N^- \leq 0$ and $LN^- = I_3$.
\item Since $B=1$, we have $\calL(B) = 0$.
  For $x = -\z^7 - \z^3 + \z^2$, we have
  $$X = \ell(x) = (1.40668, 0.65107, 1.72510, -0.94965)$$
\item
  We compute
  $$-XN^+ = (-0.42539, 0.05376, -0.36109)$$
  $$-XN^- = ( 0.89419, 1.08567,  0.67081)$$
  In this example, the only $U \in \Z^3$ within the bounds is
  $U = (0,1,0)$.
  However, for this $U$, we have
  $$UL+X = (1.07636, 2.74414, -1.17435, 0.18706)$$
  hence this is not a solution.

\end{enumerate}
  This computation shows that, in this example, Problem \ref{problem2} has no solution.\\
Analogous computations applied to all prime $p$ in the list \eqref{list} allow to give a complete answer to Question \ref{qu:continued}:
\begin{theorem}\label{teo:risposta}
Let $p$ be one of the primes for which $\QQ(\zeta_{p-1})$ has class number 1, as listed in \eqref{list}, and let $\mathfrak{P}$ be a prime ideal over $p$ in the ring of integers of $\QQ(\zeta_{p-1})$. Then $\mathfrak{P}$ is large if and only if
\begin{equation}\label{eq:listalarge} p\in \{5,7,11,13,19,31\}.\end{equation}
\end{theorem}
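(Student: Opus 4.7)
The plan is to settle each of the fifteen primes in \eqref{list} separately, combining two complementary tools. For $p \in \{41, 67, 71\}$ I would reuse the argument already given above the theorem: since $\Q(\zeta_{p-1}) \subseteq \Qab \subseteq \Qtr(i)$ and the explicit inequality $p \leq \bigl(\tfrac{1+\sqrt{5}}{2}\bigr)^{\varphi(p-1)/2}$ translates to $n(\mathfrak{P}) < \tfrac{1}{2}\log\tfrac{1+\sqrt{5}}{2}$, Proposition \ref{prop:notlargeperSchinzel} directly yields that $\mathfrak{P}$ is not large.

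For the remaining twelve primes I would run Algorithm \ref{algo} one at a time. The recipe: pick any generator $x_0$ of a prime ideal $\mathfrak{P}$ above $p$ in $\OK$ where $K = \Q(\zeta_{p-1})$ (which exists because $K$ has class number one), and run Algorithm \ref{algo} on the input $(x_0, 1)$. By Proposition \ref{proofalgo} the output is the complete finite list of units $u \in \UK$ such that $x_0 u$ is a large generator of $\mathfrak{P}$. For the six primes $p \in \{5, 7, 11, 13, 19, 31\}$ I expect the list to be nonempty, and I would record one explicit large generator in each case as a certificate, directly verifiable by evaluating the moduli of its Archimedean conjugates. For $p \in \{17, 23, 29, 37, 43, 61\}$ I expect the list to be empty; the worked case $p = 17$ preceding the theorem statement is the verification template that would simply be repeated, with the matrices $L$, $M$, $N^+$, $N^-$ adapted to each field. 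Independence of the choice of $\mathfrak{P}$ over $p$ is automatic, since all primes above $p$ are Galois conjugate and largeness is $\Gal(K/\Q)$-invariant.

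The argument involves no new theory and no conceptual obstacle; the main difficulty is purely computational. By Proposition \ref{proofalgo} the number of vectors $U \in \Z^r$ tested in step 6 is bounded by a constant (depending on $K$) times $(\log p)^r$, with $r = \varphi(p-1)/2 - 1 \leq 9$ in all cases not already disposed of by Schinzel, so the search remains of modest size. The one point requiring care is numerical: the matrix $L$ and vector $X$ are computed in floating point, so each candidate inequality $UL + X \geq 0$ (or its failure) must be certified at sufficient working precision. This is handled by running PARI/gp at high accuracy and, for any $U$ that comes near the boundary, by falling back to symbolic computation of $\sigma(x_0 u)$ as an algebraic number in $K$. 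Once this precision issue is under control, the theorem follows from inspection of the twelve algorithmic outputs together with the Schinzel argument for $p \in \{41, 67, 71\}$.
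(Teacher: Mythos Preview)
Your proposal is correct and follows essentially the same approach as the paper: the paper's proof of Theorem \ref{teo:risposta} consists of the single sentence that ``analogous computations applied to all primes $p$ in the list \eqref{list}'' give the answer, together with Table \ref{tavola} listing the large generators for the positive cases. Your write-up is more explicit about methodology (Galois invariance, complexity bound, precision control) and mildly optimizes by invoking Proposition \ref{prop:notlargeperSchinzel} for $p\in\{41,67,71\}$ rather than running Algorithm \ref{algo} on those three as well, but this distinction was already noted by the paper in Section \ref{sect:height} and does not change the substance of the argument.
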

More precisely, for each primes in the list \eqref{eq:listalarge} the following table gives (up to Galois conjugation and multiplication by a root of unity) the elements $\pi$ of absolute norm $p$ having all the components $\geq 1$ in the canonical embedding ($\zeta=\zeta_{p-1}$ in each case): 

\begin{table}[h]
\def\arraystretch{1.5}
\begin{tabular}{l|l}    $p$ & $\pi$\\      \hline $5$ & $2\zeta+1$\\
    $7$ & $\zeta-3$\\
    $11$ & $2\zeta^3-1$, \quad $2\zeta^2-\zeta+1$\\
    $13$ & $-2\zeta^3-\zeta^2$, \quad $\zeta^3-\zeta^2+2$\\
    $19$  & $-\zeta^4-\zeta^3+\zeta^2+\zeta+1$\\
    $31$ & $-\zeta^7-\zeta^3-\zeta$,\quad $ -\zeta^6-\zeta^5+\zeta^3+\zeta^2+\zeta-1$\\
    &
    \end{tabular}
   
    \caption{Strictly large integers of absolute norm $p$ in $\QQ(\zeta_{p-1})$} \label{tavola} 
\end{table}

\section{Another application: floor functions and types}\label{sect:floorfunctions}

Let $K$ be a number field of degree $d$ over $\QQ$, and let $\mathcal{O}_K$ be its ring of integers. We fix an ideal $\frA$ of $\calo_K$.
The aim of this section is to apply largeness (when possible) in order to define  complete sets of representatives of $K/\frA$  (which will be called \emph{types}) satisfying some integrality properties and having all the Archimedean embeddings bounded in a controlled way. \\ Types associated to a prime ideal $\mathfrak{P}$ of a number field were introduced in \cite{CapuanoMurruTerracini2022} with the aim of  constructing a general notion of $\mathfrak{P}$-adic continued fractions and studying their finiteness and periodicity properties.

Let $\calM_K^0$ be a set of representatives for the non-Archimedean places of $K$. For every rational prime $p$ and every  $v\in\mathcal{M}_K^0$ above $p$ let $K_{v}\subseteq \overline{\mathbb{Q}}_p$ be the completion of $K$ w.r.t. the $v$-adic valuation and $\calo_v$ be its valuation ring; we put $d_v=[K_v:\QQ_p]$. Let $|\cdot |_v=|N_{K_v/\QQ_p}(\cdot)|_p^{\frac 1{d_v}}$ be the unique extension  of $|\cdot |_p$ to $K_v$.  
Let $\widetilde{K}=\prod_{v \mid \frA} K_v$ be the $\frA$-adic completion of $K$, with $K$ diagonally embedded, and $\widetilde{\calo}=\prod_{v \mid \frA}\calo_v$.\\
Let $S_0=\{v\in \calM_K^0 \mid  v \mid \frA\}$.
\begin{definition}
An \emph{$\frA$-adic floor function} for $K$ is a function $s:\widetilde K\to K$ such that
\begin{itemize} 
\item[a)] $\alpha-s(\alpha)\in \frA\widetilde\calo$ for every $\alpha\in \widetilde K$;
\item[b)] $|s(\alpha)|_{v}\leq 1$ for every  $v\in \calM_K^0\setminus S_0$;
\item[c)] $s(0)=0$;
\item[d)] $s(\alpha)=s(\beta)$ if $\alpha-\beta\in\frA\widetilde\calo$.
\end{itemize}
\end{definition}

By the Strong Approximation Theorem in number fields (see for example \cite[Theorem 4.1]{Cassels}),  $\frA$-adic floor functions always exist, and there are infinitely many. \\
We define the ring of $S_0$-integers
\[\calo_{K,S_0}=\{\alpha\in K\ |\ |\alpha|_v\leq 1\hbox{ for every } v \in \calM_K^0 \setminus S_0\}.
\]
Then, we can regard an $\frA$-adic floor function as a map $s: \widetilde K/\frA\widetilde\calo\to \calo_{K,S_0}$ such that $s(\frA\widetilde\calo)=0$ and which is a section of the projection map $\widetilde K\to \widetilde K/\frA\widetilde\calo$.
Therefore the choice of an $\frA$-adic floor function amounts to choose  a set $\calY$ of representatives of the cosets of $\frA\widetilde\calo$ in $\widetilde K$ containing $0$ and contained in
$\calo_{K,S_0}$.\\
We shall call the data $\tau=(K,\frA,s)$ (or $(K,\frA,\calY)$) a \emph{type}. 
\begin{remark} The absolute Galois group $\Gal(\Qalg/\QQ)$ acts on the set of types; indeed, if $\tau=(K,\frA, s)$ is a type, then $\sigma\in \Gal(\Qalg/\QQ)$ induces a continuous map $\widetilde K \to \widetilde{K^\sigma}$, where $\widetilde{K^\sigma}$  is the completion of $K^\sigma$ with respect to the ideal $\frA^\sigma$.  Then  $\tau^\sigma=(K^\sigma,\frA^\sigma, s^\sigma)$ is also a type, where $s^\sigma=\sigma\circ s\circ \sigma^{-1}$. In particular, if $K/\QQ$ is a Galois extension and $\sigma$ belongs to the decomposition group 
$$D_\frA=\{\sigma\in\Gal(\Qalg/\QQ)\ |\  \frA^\sigma=\frA\},$$
   then $\tau^\sigma=(K,\frA, s^\sigma)$ is again an $\frA$-adic type.
\end{remark}
\subsection{Types arising from generators of $\frA$} \label{sec:special_type}
In the case $\frA$ is principal, there is a natural way of defining an $\frA$-adic floor function.
Indeed, let $\pi\in\frA$ be  generator and let $\mathcal{R}$ be a  complete set of representatives of $\calo_K/\frA$ containing $0$. Then, every $\alpha\in \widetilde K$ can be expressed uniquely as a Laurent series  $\alpha=\sum_{j=-n}^\infty c_j\pi^j$, where $c_j\in\mathcal{R}$ for every $j$. It is possible to define an  $\frA$-adic floor function by
$$s(\alpha)=\sum_{j=-n}^0c_j\pi^j\in K. $$
We shall denote the types $\tau=(K,\frA,s)$ obtained in this way by $\tau=(K,\pi,\calR)$, and we will usually call them \emph{special types}.

\begin{example}[Browkin and Ruban types over $\mathbb{Q}$]
When $K=\QQ$ and $\pi=p$ odd prime, two main special types have been studied in the literature:
\begin{itemize}
\item the \emph{Browkin type} $\tau_B=(\QQ,p,\calR_B)$ where $\calR_B=\{- \frac {p-1} 2,\ldots, \frac {p-1} 2\}$ (see \cite{Browkin1978, Bedocchi1988, Bedocchi1989, Bedocchi1990, Browkin2000, CapuanoMurruTerracini2020});
\item the \emph{Ruban type} $\tau_R=(\QQ,p,\calR_R)$ where $\calR_R=\{0,\ldots, p-1\}$ (see \cite{Ruban1970, Laohakosol1985, Wang1985, CapuanoVenezianoZannier2019}).
\end{itemize}
\end{example}

\subsection{Bounded types} We say that a type $\tau=(K,\frA,s)$ is \emph{bounded} if there exists a real number $C>0$ such that  $|\sigma(s(\alpha))| <C$ for every $\alpha\in K$ and every Archimedean embedding $\sigma$ of $K$.
\begin{proposition}\label{prop:existboundedtype} For every number field $K$ and prime ideal $\frA$ there exist a bounded type $(K,\frA,s)$. 
\end{proposition}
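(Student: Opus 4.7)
The plan is to build a bounded $\frA$-adic floor function directly from the geometry of numbers, without invoking largeness; the argument works for arbitrary ideals $\frA$ (prime or not) and does not require $\frA$ or any of its powers to be principal or large. Concretely, by strong approximation we have $\calo_{K,S_0}/\frA\cong \widetilde K/\frA\widetilde\calo$, so constructing a bounded floor function reduces to finding a system of coset representatives of $\frA$ inside $\calo_{K,S_0}$ whose Archimedean embeddings remain bounded.

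First I would establish the identification. The natural homomorphism $\calo_{K,S_0}\to\widetilde K/\frA\widetilde\calo$ is surjective by strong approximation applied with $S_0$ as the set of ``denominator'' places, and its kernel $\calo_{K,S_0}\cap\frA\widetilde\calo$ equals $\frA$ by an elementary valuation check: integrality at every $v\notin S_0$ and valuation $\geq v(\frA)$ at every $v\in S_0$ characterizes elements of $\frA$.

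Next, I would use the canonical embedding $\iota:K\to \RR^{r_1}\times\CC^{r_2}$. Because $\frA$ has finite index in $\calo_K$, the image $\iota(\frA)$ is a full-rank lattice, so one can fix a bounded fundamental domain $\calD$ containing $0$ (for example, the half-open parallelepiped based at $0$ on a $\ZZ$-basis of $\iota(\frA)$). For each $\beta\in\calo_{K,S_0}$ the coset $\iota(\beta)+\iota(\frA)$ meets $\calD$ in a unique point $\iota(\beta-f_\beta)$ with $f_\beta\in\frA$, and $s_0(\beta):=\beta-f_\beta$ descends to a well-defined map $\calo_{K,S_0}/\frA\to\calo_{K,S_0}$ (if $\beta'=\beta+f'$ with $f'\in\frA$, injectivity of $\iota$ on $K$ forces $s_0(\beta')=s_0(\beta)$). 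Pulling back through the isomorphism of the previous paragraph defines $s:\widetilde K\to K$: pick any $\beta\in\calo_{K,S_0}$ with $\beta\equiv\alpha\mod\frA\widetilde\calo$ and set $s(\alpha):=s_0(\beta)$, well-defined by the kernel computation.

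Verification of the four axioms and of boundedness is then mechanical: a) $\alpha-s(\alpha)=(\alpha-\beta)+f_\beta\in\frA\widetilde\calo$; b) $s(\alpha)\in\calo_{K,S_0}$; c) $0\in\calD$ forces $f_0=0$ so $s(0)=0$; d) is built into the construction; and $|\sigma(s(\alpha))|$ is bounded by the diameter of $\calD$, independently of $\alpha\in\widetilde K$ and of $\sigma\in\Sigma$. The only substantial input is strong approximation for the surjectivity in the first step; everything else reduces to choosing a fundamental domain. A more explicit bounded floor function, when $\frA$ happens to be large, is of course provided by the special types of \S\ref{sec:special_type}, which is why largeness is relevant here even though not strictly needed for existence.
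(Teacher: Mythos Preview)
Your proof is correct and follows essentially the same route as the paper: use strong approximation to obtain a representative in $\calo_{K,S_0}$, then translate by an element of $\frA$ so that its image under $\iota$ lies in a bounded fundamental domain for the lattice $\iota(\frA)$. The paper packages the first step as ``choose any $\frA$-adic floor function $s'$'' rather than spelling out the isomorphism $\calo_{K,S_0}/\frA\cong\widetilde K/\frA\widetilde\calo$, but this is a purely cosmetic difference.
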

\begin{proof}
Let $\iota: K\to \RR^{r_1}\times\CC^{r_2}\simeq \RR^d$ be the canonical embedding. Then $\iota(\frA)$ is a lattice in $\RR^d$. Let $\mathcal{D}_\frA$ be a bounded fundamental domain containing $0$. We define a floor function $s$ in the following way: firstly choose any $\frA$-adic floor function $s'$ for $K$. Let $\alpha\in \widetilde K$ and put $\alpha'=s'(\alpha)$; then $\beta= \alpha'+\gamma\in D_\frA$ for a suitable $\gamma\in \frA$; define $s(\alpha)=\beta$. Since $D_\frA$ is a bounded subset of $\RR^d$, the claim is proven.
\end{proof}
\begin{remark}  Let $\rho$ be the covering radius of the lattice $\iota(\frA)$ with respect to the sup norm. Then the closed ball centered in $0$ of radius $\rho$ with respect to this norm contains a fundamental domain $D_\frA$ as in the proof of Proposition \ref{prop:existboundedtype}. In particular we see that there exists a type $(K,\frA,s)$ such that $||\iota(s(x))||_\infty\leq \rho$, for every $x\in\widetilde K$.\end{remark} 

\begin{proposition}\label{prop:specialtypesbounded} Assume that $\frA$ is a non-zero principal ideal of $\calo_K$ having a strictly large generator $\pi$. Let $\mathcal{R}$ be any complete set of representatives of $\calo_K/\frA$ containing $0$. Then the special type $(K,\pi,\mathcal{R})$ is bounded.
\end{proposition}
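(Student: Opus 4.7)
The plan is to exploit that the ``digits'' come from a finite set and that $\pi$ being strictly large makes $|\sigma(\pi)|>1$ for every archimedean $\sigma$, so the Laurent expansion defining $s(\alpha)$ is bounded by a convergent geometric series uniformly in $\alpha$.

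First I would set notation. Since $\calo_K/\frA$ is finite of cardinality $N(\frA)$, the chosen set of representatives $\mathcal{R}$ is finite. Hence
\[
M := \max_{c\in\mathcal{R}}\ \max_{\sigma\in\Sigma} |\sigma(c)|
\]
is a finite constant. Because $\pi$ is strictly large, we have $|\sigma(\pi)|>1$ for every $\sigma\in\Sigma$, so
\[
q := \min_{\sigma\in\Sigma}|\sigma(\pi)| > 1.
\]
Next I would recall the construction of $s$: for $\alpha\in \widetilde K$, the $\pi$-adic expansion $\alpha=\sum_{j\geq -n}c_j\pi^j$ (with $c_j\in\mathcal{R}$) gives $s(\alpha)=\sum_{j=-n}^0 c_j\pi^j\in K$, a finite sum involving only \emph{non-positive} powers of $\pi$.

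Then, for any embedding $\sigma\in\Sigma$, I would apply the triangle inequality to $\sigma(s(\alpha))=\sum_{j=-n}^0 \sigma(c_j)\sigma(\pi)^j$ and bound it by the geometric series:
\[
|\sigma(s(\alpha))| \;\leq\; \sum_{j=-n}^{0} |\sigma(c_j)|\,|\sigma(\pi)|^{j}
\;\leq\; M\sum_{k=0}^{n}|\sigma(\pi)|^{-k}
\;\leq\; M\sum_{k=0}^{\infty}|\sigma(\pi)|^{-k}
\;=\; \frac{M\,|\sigma(\pi)|}{|\sigma(\pi)|-1}.
\]
Using $|\sigma(\pi)|\geq q>1$, the final fraction is bounded above by $Mq/(q-1)$, which is independent of $\alpha$, $n$, and $\sigma$. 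Setting
\[
C := \frac{Mq}{q-1}+1
\]
(or any number strictly greater than $Mq/(q-1)$) then yields $|\sigma(s(\alpha))|<C$ for every $\alpha\in\widetilde K$ and every $\sigma\in\Sigma$, proving that the special type is bounded.

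There is no real obstacle here; the only thing worth double-checking is that the sum defining $s(\alpha)$ involves only $\pi^j$ with $j\leq 0$ (so that the geometric series has ratio $|\sigma(\pi)|^{-1}<1$, hence converges) and that $\mathcal{R}$ being finite provides the uniform bound $M$ on the $\sigma$-images of the digits. Both are immediate from the definition of the special type and the strict largeness of $\pi$.
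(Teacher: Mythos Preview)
Your proof is correct and follows essentially the same approach as the paper: bound each archimedean image of $s(\alpha)$ by a geometric series using that the digits lie in the finite set $\mathcal{R}$ and that $|\sigma(\pi)|>1$. The only cosmetic difference is that the paper keeps per-embedding constants $L_\sigma=\max_{c\in\mathcal{R}}|\sigma(c)|$ and $\lambda_\sigma=|\sigma(\pi)|$ to get the bound $L_\sigma\lambda_\sigma/(\lambda_\sigma-1)$, whereas you pass immediately to a uniform bound $Mq/(q-1)$; both yield the same conclusion.
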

 
\begin{proof}
For every Archimedean embedding $\sigma:K\to\CC$ let $\lambda_\sigma=|\sigma(\pi)|$ and $L_\sigma=\max\{|\sigma(c)|\ |\ c\in \mathcal{R}\}$. 
Then for every $\sigma$
$$\left|\sum_{j=-n}^0 \sigma(c_j)\sigma(\pi^j)\right | \leq  \frac {L_\sigma\lambda_\sigma}{\lambda_\sigma-1}.$$
\end{proof}

\begin{remark}
For each $p$ in the list \eqref{list}, and every prime ideal $\mathfrak{P}$ over $p$ in $\ZZ(\zeta_{p-1})$, the set $\mathcal{R}_p=\{\zeta^i\ |\ i=0,\ldots, p-2\}\cup\{0\}$ is a complete set of representatives of $\ZZ[\zeta]/\mathfrak{P}$. Then by Proposition \ref{prop:specialtypesbounded} the special types $(\QQ(\zeta_{p-1}),\pi,\mathcal{R}_p)$ are bounded for every $p$  and $\pi$ as in Table \ref{tavola}.
\end{remark}
\bibliographystyle{abbrv}
\bibliography{MCF}

\end{document}